\newtheorem{thm}{Theorem}
\newtheorem{lemma}{Lemma}
\theoremstyle{definition}
\newtheorem{claim}{Claim}
\newtheorem{remark}[claim]{Remark}
\renewcommand{\mod}{{\rm mod}}
\begin{document}
\title
{\bf\Large An inductive proof of Straub's $q$-analogue of Ljunggren's congruence\thanks{Supported by the Doctorate Foundation of Northwestern Polytechnical University (cx201326).}}

\date{}
\author{\small Bo Ning\thanks{E-mail address: ningbo\_math84@mail.nwpu.edu.cn (B. Ning)}\\[2mm]
\small Department of Applied Mathematics, Northwestern Polytechnical University,\\
\small Xi'an, Shaanxi 710072, P.R.~China}
\maketitle

\begin{abstract}
Recently, Straub gave an interesting $q$-analogue of a binomial congruence of Ljunggren. In this note we give an inductive proof of his result.
\medskip

\noindent {\bf Keywords: $q$-analogue; $q$-congruence; binomial coefficient; Ljunggren's congruence}
\smallskip

\noindent {\bf AMS Subject Classification (2000): 05A10, 11A07, 11B65}
\end{abstract}
\section{Introduction}

$q$-Series has been proved to be a challenging and interesting area in number theory. For a basic introduction to $q$-series and a wonderful survey paper, see \cite[Chapter 10]{Andrews_Askey_Roy} and \cite{Berndt}, respectively. In particular, $q$-analogues of a lot of classical congruences have been studies by several authors. We refer the readers to \cite{Andrews_5,Chapman_Pan,Clark,Pan_1,Pan_Cao,Shi_Pan,Straub}. For a detailed talk about $q$-congruences, we refer to Pan's Ph.D thesis \cite{Pan_0}.

As shown in \cite{Andrews_Askey_Roy}, we use $[n]_{q}:=1+q+q^2+\ldots +q^{n-1}=\frac{1-q^n}{1-q}$, ${[n]!}_q:=[n]_q[n-1]_q\cdots[1]_q$ and $\binom{n}{k}_q:=\frac{{[n]!}_q}{{[k]!}_q{[n-k]!}_q}$ to denote the usual $q$-analogues of numbers, factorials and binomial coefficients, respectively. It is easy to see that the usual numbers, factorials and binomial coefficients can be obtained as $q=1$.

The classical Lucas' congruence \cite{Lucas} tells us how to compute a binomial coefficient modulo a prime.

\begin{thm}[Lucas, \cite{Lucas}]\label{th1}
For any prime $p$, we can determine $\binom{n}{m}~(\mod~p)$ from the base $p$ expansions of $n$ and $m$. Specially, if $n=\sum_{i=0}^{t}b_ip^i$ and $m=\sum_{i=0}^{t}c_ip^{i}$ where $0\leq b_i,c_i<p$, then
\begin{align}\label{al1}
\binom{n}{m}\equiv\prod_{i=0}^{t}\binom{b_i}{c_i}~(\mod~p).
\end{align}
\end{thm}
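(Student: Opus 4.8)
The plan is to deduce the multi-digit statement from a single one-step reduction and then induct on the number of base-$p$ digits $t$. The engine of the whole argument is the polynomial congruence $(1+x)^p \equiv 1 + x^p \pmod{p}$ in $\mathbb{Z}[x]$, which follows immediately from the binomial theorem together with the elementary fact that $p \mid \binom{p}{k}$ for $0 < k < p$: the factor $p$ in the numerator $p!$ cannot be cancelled by the denominator $k!\,(p-k)!$ when $0<k<p$. Iterating this congruence, I would first record that $(1+x)^{p^i} \equiv 1 + x^{p^i} \pmod{p}$ for every $i \ge 0$.

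Next I would isolate the one-step reduction lemma: if $n = pa + b$ and $m = pc + d$ with $0 \le b, d < p$, then $\binom{pa+b}{pc+d} \equiv \binom{a}{c}\binom{b}{d} \pmod{p}$. To prove it, expand $(1+x)^{pa+b} = \bigl((1+x)^p\bigr)^a (1+x)^b \equiv (1+x^p)^a (1+x)^b \pmod{p}$. The coefficient of $x^m$ on the left is $\binom{n}{m}$. On the right, the factor $(1+x^p)^a$ contributes only exponents divisible by $p$, while $(1+x)^b$ contributes only exponents $<p$; hence a monomial $x^{pc+d}$ can arise in exactly one way, namely by taking $x^{pc}$ from the first factor and $x^d$ from the second. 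Reading off that coefficient gives $\binom{a}{c}\binom{b}{d}$, and comparing the two expressions for the coefficient of $x^m$ modulo $p$ yields the lemma.

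With the lemma available, I would finish by induction on $t$, where $n = \sum_{i=0}^{t} b_i p^i$ and $m = \sum_{i=0}^{t} c_i p^i$. The base case $t = 0$ is immediate since then $\binom{n}{m} = \binom{b_0}{c_0}$. For the inductive step, write $n = b_0 + p n'$ and $m = c_0 + p m'$, where $n' = \sum_{i=1}^{t} b_i p^{i-1}$ and $m' = \sum_{i=1}^{t} c_i p^{i-1}$ are the numbers obtained by deleting the least significant digit. The one-step lemma gives $\binom{n}{m} \equiv \binom{n'}{m'}\binom{b_0}{c_0} \pmod{p}$, and applying the induction hypothesis to the $t$-digit numbers $n', m'$ gives $\binom{n'}{m'} \equiv \prod_{i=1}^{t} \binom{b_i}{c_i} \pmod{p}$; multiplying through recovers \eqref{al1}.

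The step I expect to demand the most care is the coefficient matching inside the one-step lemma, precisely the claim that $x^{pc+d}$ is produced by the unique split $x^{pc}\cdot x^d$. This rests on the uniqueness of the quotient-remainder decomposition of $m$ by $p$, equivalently on the constraint $0 \le d < p$, which is exactly what prevents any interaction (``carrying'') between adjacent digits and makes the product formula clean. Everything else reduces to routine bookkeeping with polynomial coefficients modulo $p$.
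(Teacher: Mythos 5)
Your proof is correct. Note, however, that the paper does not actually prove Theorem \ref{th1}: it is quoted as a classical result with a citation to Lucas, so there is no in-paper argument to compare against. What you have written is the standard generating-function proof (usually attributed to Fine): the congruence $(1+x)^p\equiv 1+x^p\ (\mod\ p)$, the one-step reduction $\binom{pa+b}{pc+d}\equiv\binom{a}{c}\binom{b}{d}\ (\mod\ p)$ obtained by matching the coefficient of $x^{pc+d}$ in $(1+x^p)^a(1+x)^b$, and induction on the number of base-$p$ digits. The key uniqueness step is handled correctly: since $(1+x)^b$ has degree $b<p$ and $0\le d<p$, the decomposition $pc+d=pj+(m-pj)$ with $0\le m-pj\le b$ forces $j=c$, and the degenerate cases $c>a$ or $d>b$ are absorbed by the convention that the corresponding binomial coefficients vanish. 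This argument is complete and self-contained, which is more than the paper offers for this particular statement.
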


In particular, when $n=kp$ and $m=sp$, (\ref{al1}) implies that $\binom{kp}{sp}\equiv \binom{k}{s} ~(\mod~p)$. For the case that a binomial coefficient modulo a prime power, Ljunggren \cite{Ljunggren} gave an interesting extension in 1952.

\begin{thm}[Ljunggren, \cite{Ljunggren}]\label{th2}
For any prime $p\geq 5$ and nonnegative integers $k$, $s$,
\begin{align}\label{al2}
\binom{kp}{sp}\equiv \binom{k}{s} ~(\mod~p^3).
\end{align}
\end{thm}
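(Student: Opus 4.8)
The plan is to strip the powers of $p$ out of $\binom{kp}{sp}$ and reduce everything to products of integers coprime to $p$. Collecting the multiples of $p$ among $1,\dots,kp$, I would first write
\[
(kp)! = p^{k}\,k!\,F(k),\qquad F(k):=\prod_{\substack{1\le j\le kp\\ p\nmid j}} j,
\]
and likewise $(sp)!=p^{s}s!\,F(s)$ and $((k-s)p)!=p^{k-s}(k-s)!\,F(k-s)$. Substituting into $\binom{kp}{sp}=(kp)!/\big((sp)!\,((k-s)p)!\big)$, the powers of $p$ cancel exactly because $p^{k}/(p^{s}p^{k-s})=1$, leaving
\[
\binom{kp}{sp}=\binom{k}{s}\cdot\frac{F(k)}{F(s)\,F(k-s)}.
\]
As every factor of each $F$ is coprime to $p$, the product $F(s)F(k-s)$ is a unit modulo $p^{3}$, so it suffices to prove $F(k)\equiv F(s)\,F(k-s)\pmod{p^{3}}$ and then cancel.

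Next I would cut $F(k)$ into $k$ consecutive blocks, $F(k)=\prod_{i=1}^{k}G(i)$ with $G(i):=\prod_{r=1}^{p-1}\big((i-1)p+r\big)$, and evaluate each block modulo $p^{3}$. Writing $a=(i-1)p$ and expanding in the elementary symmetric functions $e_{j}=e_{j}(1,2,\dots,p-1)$ gives $\prod_{r=1}^{p-1}(a+r)=\sum_{j=0}^{p-1}e_{j}\,a^{\,p-1-j}$; since $a$ is a multiple of $p$ we have $a^{3}\equiv 0\pmod{p^{3}}$, so only three terms survive:
\[
G(i)\equiv e_{p-1}+e_{p-2}\,a+e_{p-3}\,a^{2}\pmod{p^{3}}.
\]
Here $e_{p-1}=(p-1)!$, and the claim I must establish is that the two correction terms vanish modulo $p^{3}$. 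Because $a$ carries one factor of $p$ and $a^{2}$ two, this reduces to $e_{p-2}\equiv 0\pmod{p^{2}}$ and $e_{p-3}\equiv 0\pmod{p}$.

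This block evaluation is the heart of the matter and the step I expect to be hardest. Both vanishings come from Wolstenholme's theorem: writing $e_{p-2}=(p-1)!\sum_{r=1}^{p-1}\tfrac1r$ and $e_{p-3}=(p-1)!\sum_{r<s}\tfrac1{rs}=\tfrac{(p-1)!}{2}\big[(\sum_r \tfrac1r)^{2}-\sum_r\tfrac1{r^{2}}\big]$, the congruences $\sum_{r=1}^{p-1}\tfrac1r\equiv 0\pmod{p^{2}}$ and $\sum_{r=1}^{p-1}\tfrac1{r^{2}}\equiv 0\pmod{p}$ give exactly what is needed; this is precisely where the hypothesis $p\ge 5$ enters, since Wolstenholme fails for $p=2,3$. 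Hence $G(i)\equiv(p-1)!\pmod{p^{3}}$ for every $i$, independently of the block index, and multiplying the blocks yields $F(k)\equiv\big((p-1)!\big)^{k}\pmod{p^{3}}$. In particular $F(s)\,F(k-s)\equiv\big((p-1)!\big)^{s+(k-s)}=\big((p-1)!\big)^{k}\equiv F(k)\pmod{p^{3}}$, which is the identity sought; cancelling the unit $F(s)F(k-s)$ completes the proof. The degenerate cases $s=0$ and $s>k$ are immediate.
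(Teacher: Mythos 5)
Your proof is correct, but it follows a genuinely different route from anything in the paper. In fact the paper never proves Theorem \ref{th2} at all: it is quoted as Ljunggren's classical result, the only proof it points to is Stanley's combinatorial one, and the only argument implicit in the paper is the specialization $q=1$ of Straub's $q$-analogue (Theorem \ref{th3}), which the author establishes by induction on $k$ and $s$ via the $q$-Chu--Vandermonde formula, with the Shi--Pan congruence $\sum_{i=1}^{p-1} q^i/(1-q^i)^2 \equiv -\frac{p^2-1}{12} \pmod{[p]_q}$ playing the role that Wolstenholme's theorem plays for you. Your route is the standard elementary one: strip the multiples of $p$ out of the three factorials so that $\binom{kp}{sp}=\binom{k}{s}\,F(k)/\bigl(F(s)F(k-s)\bigr)$, cut each $F$ into blocks of $p-1$ consecutive non-multiples of $p$, and show every block is $\equiv (p-1)! \pmod{p^3}$ using $p^2 \mid e_{p-2}$ and $p \mid e_{p-3}$, both consequences of Wolstenholme's theorem --- which is exactly where the hypothesis $p\ge 5$ enters, as you note. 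All the steps check out (the identities for $e_{p-2}$ and $e_{p-3}$ in terms of harmonic sums should be read $p$-adically, but each summand $(p-1)!/(rs)$ is an integer and the division by $2$ is harmless for odd $p$, so there is no real issue). The trade-off between the two approaches is the usual one: the paper's induction yields the stronger $q$-refinement but leans on the $q$-Lucas theorem and the Shi--Pan lemma, whereas your argument is shorter, entirely self-contained, and makes the role of Wolstenholme transparent, but gives no information about the $q$-analogue that is the paper's actual subject.
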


Recently, Straub \cite{Straub} gave a $q$-analogue of Ljunggren's binomial congruence (\ref{al2}).

\begin{thm}[Straub, \cite{Straub}]\label{th3}
For any prime $p\geq 5$ and nonnegative integers $k$, $s$,
\begin{align}\label{al3}
\binom{kp}{sp}_q\equiv\binom{k}{s}_{q^{p^2}}-\binom{k}{s+1}\binom{s+1}{2}\frac{p^2-1}{12}(q^p-1)^2~(\mod~[p]_q^3).
\end{align}
\end{thm}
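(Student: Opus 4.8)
The plan is to argue by induction on $k$, with $s$ ranging over all nonnegative integers at each level. The base case $k=0$ is immediate: both sides reduce to $\binom{0}{sp}_q$, which is $1$ for $s=0$ and $0$ otherwise, while the correction term carries a factor $\binom{0}{s+1}=0$. For the inductive step I would pass from level $k$ to level $k+1$ through the $q$-Chu--Vandermonde identity applied to the splitting $(k+1)p=p+kp$,
\begin{align}
\binom{(k+1)p}{sp}_q=\sum_{j=0}^{p}q^{(p-j)(sp-j)}\binom{p}{j}_q\binom{kp}{sp-j}_q .
\end{align}
Isolating $j=0$ and $j=p$ produces the two ``main'' contributions $q^{sp^2}\binom{kp}{sp}_q$ and $\binom{kp}{(s-1)p}_q$, to each of which the induction hypothesis applies directly; write $M(k,s)$ for the remaining inner sum over $1\le j\le p-1$.

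The first input is a valuation count. Since $[p]_q$ divides $[m]_q$ exactly when $p\mid m$, the $[p]_q$-adic valuation of ${[n]!}_q$ equals $\lfloor n/p\rfloor$; hence for $0<j<p$ one gets $v\binom{p}{j}_q=1$ and $v\binom{kp}{sp-j}_q=k-(s-1)-(k-s)=1$. Consequently each summand of $M(k,s)$ is divisible by $[p]_q^2$, so modulo $[p]_q^3$ only the leading ($[p]_q$-adic first-order) coefficients of the two factors survive. Writing $\binom{p}{j}_q=a_j[p]_q+O([p]_q^2)$ and $\binom{kp}{sp-j}_q=b_j[p]_q+O([p]_q^2)$, I would obtain $a_j$ from $\binom{p}{j}_q=\frac{[p]_q}{[j]_q}\binom{p-1}{j-1}_q$ together with the evaluation $\binom{p-1}{j-1}_\zeta=(-1)^{j-1}\zeta^{-j(j-1)/2}$ at a primitive $p$-th root of unity $\zeta$, and determine $b_j$ by a first-order refinement of the $q$-Lucas reduction of $\binom{kp}{sp-j}_q$. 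This expresses $M(k,s)$ modulo $[p]_q^3$ as a concrete sum over $j$ evaluated at $\zeta$.

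The main obstacle is the exact evaluation of this sum. After inserting $a_j$, $b_j$ and the prefactor $q^{(p-j)(sp-j)}$ and simplifying at $\zeta$, the $j$-dependence should collapse to the classical shape $\sum_{j=1}^{p-1}\frac{\zeta^j}{(1-\zeta^j)^2}=-\frac{p^2-1}{12}$, which is precisely where the constant $\frac{p^2-1}{12}$ enters; I expect the outcome to be
\begin{align}
M(k,s)\equiv -\,s\binom{k}{s}\frac{p^2-1}{12}(q^p-1)^2\pmod{[p]_q^3}.
\end{align}
Establishing this clean closed form --- in particular that the putative $[p]_q^1$ term cancels, and that the surviving binomial factor is $\binom{k}{s}$ rather than $\binom{k}{s-1}$ --- is the delicate part of the argument and the step most likely to require care.

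Finally I would close the induction by checking that the claimed right-hand side obeys the matching recurrence. Using the $q^{p^2}$-Pascal rule $\binom{k+1}{s}_{q^{p^2}}=q^{sp^2}\binom{k}{s}_{q^{p^2}}+\binom{k}{s-1}_{q^{p^2}}$ for the leading term, and the ordinary Pascal rule $\binom{k+1}{s+1}=\binom{k}{s+1}+\binom{k}{s}$ together with $\binom{s+1}{2}-\binom{s}{2}=s$ for the correction term --- and observing that $q^{sp^2}\equiv1\pmod{[p]_q}$ may be used freely, since the correction is already a multiple of $(q^p-1)^2=(1-q)^2[p]_q^2$ --- the increment demanded of the correction term upon passing from $k$ to $k+1$ is exactly $-\,s\binom{k}{s}\frac{p^2-1}{12}(q^p-1)^2$, which matches the value of $M(k,s)$ computed above. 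The degenerate ranges $s=0$ and $s>k$, where some binomials vanish, are handled by direct inspection. This completes the inductive step and hence the proof.
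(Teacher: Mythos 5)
Your induction scheme is sound in outline: the base case, the isolation of the $j=0$ and $j=p$ terms of the $q$-Chu--Vandermonde expansion, the valuation count showing each summand of $M(k,s)$ is divisible by $[p]_q^2$, and the closing Pascal-rule bookkeeping (which correctly identifies $-s\binom{k}{s}\frac{p^2-1}{12}(q^p-1)^2$ as the value $M(k,s)$ must take) are all correct. But there is a genuine gap at exactly the point you flag yourself: the evaluation of $M(k,s)$ is asserted, not established. You would need the first-order coefficients $b_j$ with $\binom{kp}{sp-j}_q\equiv b_j[p]_q \pmod{[p]_q^2}$ for general $k$ and $s$, and this ``first-order refinement of $q$-Lucas'' is itself a substantial lemma (essentially the Clark--Straub cyclotomic computation); without it, the claim that the sum over $j$ collapses to $\sum_{j=1}^{p-1}\zeta^j/(1-\zeta^j)^2=-\frac{p^2-1}{12}$ times exactly $s\binom{k}{s}$ is unverified, and that is the entire content of the theorem beyond the trivial bookkeeping. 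As written this is a plan, not a proof.

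It is worth contrasting your route with the paper's, which avoids this computation almost entirely. The paper applies Chu--Vandermonde a \emph{second} time, expanding each $\binom{(k-1)p}{sp-i}_q$ inside the middle sum; since every resulting term already carries a factor $[p]_q^2$, the $q$-power prefactors may be adjusted modulo $[p]_q$ and the inner sums re-completed to full Chu--Vandermonde sums, to which the induction hypothesis applies directly. The double sum that remains is supported on $i+j=p$ modulo $[p]_q^3$ and reduces to the single case $\binom{2p}{p}_q$ (Lemma \ref{le3}), which is the only place an explicit constant enters, via the Shi--Pan evaluation $\sum_{i=1}^{p-1}q^i/(1-q^i)^2\equiv-(p^2-1)/12\pmod{[p]_q}$. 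To complete your version you would either have to prove a first-order $q$-Lucas lemma and carry out the root-of-unity sum at every inductive step, or adopt the paper's double-expansion trick, which confines the hard evaluation to one base case.
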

Note that Straub's proof largely depends on the method in \cite{Clark}. In this note we give an inductive proof of Straub's result.

\begin{remark}
For $q$-binomial coefficients, there is a combinatorial interpretation in terms of areas under lattice paths due to P\'{o}lya, see \cite[Vol.4, p.444]{Polya}. In \cite[Chapter 1, Problem 6 ($d$)]{Stanley}, Stanley gave a combinatorial proof of Theorem \ref{th2}. Maybe it is interesting to find a combinatorial proof of Theorem \ref{th3}.
\end{remark}

\section{An inductive proof of Theorem \ref{th3}}
The following two results are well-known (see \cite[(3.3.10)]{Andrews_0} and \cite{Desarmenien,Olive}).
\begin{lemma}{\bf {(The $q$-Chu-Vandermonde-formula)}}
For nonnegative integers $m$, $n$ and $h$,
\begin{align*}
\sum\limits_{k=0}^{h}\binom{n}{k}_q\binom{m}{h-k}_q=\binom{m+n}{h}_q.
\end{align*}
\end{lemma}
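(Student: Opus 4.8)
The plan is to derive the identity from the Gaussian ($q$-)binomial theorem
\[
\prod_{j=0}^{N-1}(1+q^{j}x)=\sum_{h=0}^{N}q^{\binom{h}{2}}\binom{N}{h}_{q}x^{h},
\]
which itself follows by a one-line induction on $N$ from the $q$-Pascal recurrence $\binom{N}{h}_{q}=\binom{N-1}{h-1}_{q}+q^{h}\binom{N-1}{h}_{q}$. First I would set $N=m+n$ and factor the product into its first $n$ and last $m$ factors,
\[
\prod_{j=0}^{m+n-1}(1+q^{j}x)=\Bigl(\prod_{j=0}^{n-1}(1+q^{j}x)\Bigr)\Bigl(\prod_{j=0}^{m-1}(1+q^{j}(q^{n}x))\Bigr),
\]
so that the second block is again an instance of the $q$-binomial theorem, now in the variable $q^{n}x$.

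Next I would expand both blocks by the theorem, multiply the two finite sums, and read off the coefficient of $x^{h}$ on each side. On the left it is $q^{\binom{h}{2}}\binom{m+n}{h}_{q}$; on the right, letting $\binom{n}{k}_{q}$ come from the first block and $\binom{m}{h-k}_{q}$ from the second, the substitution $x\mapsto q^{n}x$ attaches an extra factor $q^{n(h-k)}$, so the coefficient is $\sum_{k=0}^{h}q^{\binom{k}{2}+\binom{h-k}{2}+n(h-k)}\binom{n}{k}_{q}\binom{m}{h-k}_{q}$. Dividing through by $q^{\binom{h}{2}}$ then produces the $q$-Chu-Vandermonde formula, the $k$-th summand carrying the Vandermonde weight $q^{(n-k)(h-k)}$.

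I expect the only genuine obstacle to be this exponent bookkeeping: one must verify the elementary identity $\binom{k}{2}+\binom{h-k}{2}-\binom{h}{2}=k(k-h)$, whence $\binom{k}{2}+\binom{h-k}{2}+n(h-k)-\binom{h}{2}=(n-k)(h-k)$, so that the $q$-powers created by the shift $x\mapsto q^{n}x$ reassemble into a single clean weight. The degenerate ranges $k>n$ or $h-k>m$ cause no trouble, since the corresponding $q$-binomials vanish and the sum may be taken over all of $0\le k\le h$.

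As a second route, more in keeping with the inductive flavour of this note, I would instead induct on $n$: the base case $n=0$ retains only the $k=0$ term and reduces both sides to $\binom{m}{h}_{q}$, while in the inductive step I would substitute the $q$-Pascal expansion of $\binom{n}{k}_{q}$, re-index the two resulting sums, and match them against the two-term $q$-Pascal expansion of $\binom{m+n}{h}_{q}$. This avoids the $q$-binomial theorem but forces one to carry the same $q$-weights by hand, so I regard the generating-function argument as the cleaner of the two.
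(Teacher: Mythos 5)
Your argument is correct, and it is necessarily a different route from the paper's, because the paper gives no proof at all: the lemma is quoted as a known result with a pointer to Andrews' book. More importantly, your derivation exposes a typo in the statement itself. What your generating-function argument actually yields is the \emph{weighted} identity
\[
\binom{m+n}{h}_q=\sum_{k=0}^{h}q^{(n-k)(h-k)}\binom{n}{k}_q\binom{m}{h-k}_q,
\]
and this weight cannot be dropped: as printed (with no $q$-power on the summands) the lemma is false, e.g.\ $m=n=h=1$ gives $2$ on the left-hand side but $\binom{2}{1}_q=1+q$ on the right. The weighted form is also what the paper actually uses everywhere: the factor $q^{i^2}$ in $\binom{2p}{p}_q=\sum_{i=0}^{p}\binom{p}{i}_q^2q^{i^2}$ (Lemma 3) and the factors $q^{i((k-2)p+i)}$ and $q^{i((k-s-1)p+i)}$ in Lemma 4 and in the proof of Theorem 3 are exactly your weight $q^{(n-k)(h-k)}$ after the reindexing $i=h-k$; so your proof establishes the statement the paper needs, not the one it prints. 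Your exponent bookkeeping checks out ($\binom{k}{2}+\binom{h-k}{2}-\binom{h}{2}=k(k-h)$, hence total weight $(n-k)(h-k)$ after the shift $x\mapsto q^{n}x$), and discarding out-of-range terms via vanishing $q$-binomials is handled correctly. One small repair to your preliminary step: the recurrence $\binom{N}{h}_q=\binom{N-1}{h-1}_q+q^{h}\binom{N-1}{h}_q$ runs the induction for the Gaussian binomial theorem when you peel off the factor $(1+x)$ and replace $x$ by $qx$ in the remaining product; if you instead peel off $(1+q^{N-1}x)$, you need the companion recurrence $\binom{N}{h}_q=q^{N-h}\binom{N-1}{h-1}_q+\binom{N-1}{h}_q$.
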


\begin{lemma}{\bf {(The $q$-Lucas-Theorem)}} For any prime $p$ and nonnegative integers $a$, $b$, $r$ and $s$ such
that $0\leq b,s\leq p-1$,
\begin{align*}
\binom{ap+b}{rp+s}_q\equiv\binom{a}{r}\binom{b}{s}_q~~~(\mod~[p]_q).
\end{align*}
\end{lemma}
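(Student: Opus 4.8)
The plan is to induct on $a$, using the $q$-Chu-Vandermonde formula of the preceding lemma to strip off one block of $p$ at a time; the whole argument rests on a single divisibility fact, namely that the interior Gaussian binomial coefficients vanish modulo $[p]_q$:
\[
\binom{p}{k}_q\equiv 0\pmod{[p]_q},\qquad 0<k<p.
\]

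First I would establish this vanishing. Every $q$-binomial coefficient is a polynomial in $\mathbb{Z}[q]$, and each $q$-integer factors into cyclotomic polynomials as $[n]_q=\prod_{d\mid n,\,d>1}\Phi_d(q)$, where $\Phi_d$ is the $d$-th cyclotomic polynomial; in particular $[p]_q=\Phi_p(q)$ is irreducible over $\mathbb{Q}$ for $p$ prime. Writing $\binom{p}{k}_q=[p]_q!/\bigl([k]_q!\,[p-k]_q!\bigr)$, the factor $\Phi_p$ divides the numerator exactly once — it occurs only in the factor $[p]_q$, and squarefreely — whereas it divides neither $[k]_q!$ nor $[p-k]_q!$, since every $[j]_q$ occurring there has $j<p$. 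As $\binom{p}{k}_q$ is a genuine polynomial, $[p]_q$ must divide it, which is the claim. Getting this cyclotomic bookkeeping stated cleanly is the step I expect to demand the most care.

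With the lemma in hand the induction is short. In the base case $a=0$: if $r=0$ the asserted congruence is the plain identity $\binom{b}{s}_q=\binom{0}{0}\binom{b}{s}_q$, while if $r\ge 1$ then $rp+s\ge p>b$ forces $\binom{b}{rp+s}_q=0=\binom{0}{r}\binom{b}{s}_q$. For the inductive step I apply $q$-Chu-Vandermonde with top parameters $p$ and $ap+b$,
\[
\binom{(a+1)p+b}{rp+s}_q=\sum_{k=0}^{rp+s}\binom{p}{k}_q\binom{ap+b}{rp+s-k}_q,
\]
and reduce modulo $[p]_q$; by the vanishing lemma only the terms $k=0$ and (when $r\ge 1$) $k=p$ survive, giving
\[
\binom{(a+1)p+b}{rp+s}_q\equiv\binom{ap+b}{rp+s}_q+\binom{ap+b}{(r-1)p+s}_q\pmod{[p]_q},
\]
with the convention that the last term is absent when $r=0$. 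Applying the induction hypothesis to each surviving term and then the ordinary Pascal rule $\binom{a}{r}+\binom{a}{r-1}=\binom{a+1}{r}$ produces $\binom{a+1}{r}\binom{b}{s}_q$, which closes the induction.

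The genuinely delicate points are thus the vanishing lemma and the boundary bookkeeping — the range of the summation index $k$, the degenerate case $r=0$, and (should one prefer the weighted form of $q$-Chu-Vandermonde) the reduction of the stray powers of $q$ using $q^{p}\equiv 1\pmod{[p]_q}$, which holds since $1-q^p=(1-q)[p]_q$. As a fallback I would instead evaluate at a primitive $p$-th root of unity $\zeta$: since $[p]_q=\Phi_p(q)$ is the minimal polynomial of $\zeta$, the congruence is equivalent to the numerical identity $\binom{ap+b}{rp+s}_{\zeta}=\binom{a}{r}\binom{b}{s}_{\zeta}$, which drops out of the factorization $\prod_{j=0}^{p-1}(1+\zeta^{j}t)=1+t^{p}$ (for odd $p$) fed into the finite $q$-binomial theorem.
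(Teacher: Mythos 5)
The paper never proves this lemma at all: it is quoted as well known, with the reader sent to D\'esarm\'enien and Olive, so there is nothing in the text to compare your argument against line by line. Your proof is sound and self-contained, and it is the standard inductive route: the cyclotomic argument for $\binom{p}{k}_q\equiv 0\pmod{[p]_q}$ when $0<k<p$ is correct (since $[p]_q=\Phi_p(q)$ is irreducible, divides $[p]!_q$ exactly once, and divides neither $[k]!_q$ nor $[p-k]!_q$), the base case and the Pascal recursion $\binom{a}{r}+\binom{a}{r-1}=\binom{a+1}{r}$ are handled correctly, and the root-of-unity fallback is also a legitimate alternative. The one point to tighten is the $q$-Chu--Vandermonde display: as written, without the weight $q^{k(ap+b-rp-s+k)}$, it is not actually an identity of polynomials (the paper's own Lemma~1 has the same omission, but every application in the paper reinstates the weight). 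You do flag this in your closing paragraph, and the fix is exactly as you say --- the only surviving terms have $k=0$ or $k=p$, where the weight is $1$ or a power of $q^p\equiv 1\pmod{[p]_q}$ --- so this is an imprecision of statement rather than a gap in the argument; I would simply carry the weighted form throughout rather than present the unweighted display as an identity. Your proof has the incidental benefit of making the paper self-contained, and the same skeleton (Chu--Vandermonde splitting off one block of $p$, with only the boundary terms surviving) is precisely the mechanism the author uses later in Lemma~\ref{le4} and in the proof of Theorem~\ref{th3}, so your argument fits the paper's method well.
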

The next Lemma (\cite[Lemma 5]{Straub}) is a big step of Straub's proof. We first give a new proof of this lemma.

\begin{lemma}\label{le3}
For any prime $p\geq 5$,
\begin{align}\label{al4}
\binom{2p}{p}_q\equiv [2]_{q^{p^2}}-\frac{p^2-1}{12}(q^p-1)^2 ~(\mod~[p]_q^3).
\end{align}
\end{lemma}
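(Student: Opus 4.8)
The plan is to argue directly from the product representation
\[
\binom{2p}{p}_q=(1+q^p)\prod_{j=1}^{p-1}\frac{[p+j]_q}{[j]_q},
\]
which drops out of $[2p]!_q/([p]!_q)^2$ after cancelling $[p-1]!_q$. Set $Q:=q^p-1=(q-1)[p]_q$; the guiding principle is that $[p]_q^k\mid Q^k$, so that modulo $[p]_q^3$ only the terms up to $Q^2$ matter. Each factor rewrites as
\[
\frac{[p+j]_q}{[j]_q}=\frac{1-q^j(1+Q)}{1-q^j}=1-a_jQ,\qquad a_j:=\frac{q^j}{1-q^j},
\]
and $1+q^p=2+Q$. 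Since each $1-q^j$ with $1\le j\le p-1$ is coprime to $[p]_q$, the $a_j$ are invertible modulo every power of $[p]_q$, so expanding the product as a polynomial in $Q$ with rational-function coefficients is legitimate.

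Discarding $Q^3$ and higher, this gives
\[
\binom{2p}{p}_q\equiv(2+Q)\bigl(1-\sigma_1Q+\sigma_2Q^2\bigr)\equiv 2+(1-2\sigma_1)Q+(2\sigma_2-\sigma_1)Q^2\pmod{[p]_q^3},
\]
where $\sigma_1=\sum_{j}a_j$ and $\sigma_2=\sum_{i<j}a_ia_j$. On the right-hand side of (\ref{al4}), $[2]_{q^{p^2}}=1+(1+Q)^p\equiv 2+pQ+\binom{p}{2}Q^2\pmod{[p]_q^3}$, so the lemma reduces to matching the coefficients of $Q$ and $Q^2$. As $Q$ carries one factor of $[p]_q$, the $Q$-term needs $\sigma_1$ only modulo $[p]_q^2$ and the $Q^2$-term needs $\sigma_1,\sigma_2$ only modulo $[p]_q$; concretely it suffices to establish
\[
\sigma_1\equiv-\frac{p-1}{2}-\frac{p^2-1}{24}Q\pmod{[p]_q^2},\qquad \sigma_2\equiv\frac{(p-1)(p-2)}{6}\pmod{[p]_q}.
\]
Feeding these back makes the $Q^2$-coefficient collapse to $\binom{p}{2}-\frac{p^2-1}{12}$, which is exactly (\ref{al4}).

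These two congruences are $q$-analogues of Wolstenholme's harmonic-sum congruences and form the core of the proof. To reach $\sigma_1$ to second order I would use the pairing $j\leftrightarrow p-j$: from $q^{p-j}=q^{-j}(1+Q)$ one gets the exact identity
\[
a_j+a_{p-j}=-1+\frac{q^jQ}{(1-q^j)\bigl((1-q^j)+Q\bigr)},
\]
and reducing the coefficient of $Q$ modulo $[p]_q$ yields
\[
2\sigma_1\equiv-(p-1)+Q\sum_{j=1}^{p-1}\frac{q^j}{(1-q^j)^2}\pmod{[p]_q^2}.
\]
Thus the second-order datum of $\sigma_1$ is controlled by a single root-of-unity sum taken only modulo $[p]_q$, and $\sigma_2$ is reached from $2\sigma_2=\sigma_1^2-\sum_j a_j^2$. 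Writing $\frac{q^j}{1-q^j}=\frac{1}{1-q^j}-1$ everywhere, all of these reduce to $\sum_j\frac{1}{1-q^j}$ and $\sum_j\frac{1}{(1-q^j)^2}$ evaluated at a primitive $p$-th root of unity, which I would compute from the logarithmic derivative of $[p]_x=\frac{x^p-1}{x-1}$ at $x=1$, obtaining $\frac{p-1}{2}$ and $-\frac{(p-1)(p-5)}{12}$; back-substitution then gives $\sum_j\frac{q^j}{(1-q^j)^2}\equiv-\frac{p^2-1}{12}$ and the stated values of $\sigma_1,\sigma_2$.

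The main obstacle is exactly the second-order evaluation of $\sigma_1$ modulo $[p]_q^2$: this is the $q$-Wolstenholme phenomenon and the reason the congruence is valid to the third power of $[p]_q$. The pairing identity is the device that trades this second-order computation for first-order (mod $[p]_q$) evaluations of symmetric root-of-unity sums, after which Newton's identities and the logarithmic derivative of $[p]_x$ finish the job. A subsidiary point to watch throughout is that $\sigma_1$, $\sigma_2$ and the auxiliary sums are rational functions; one must check that their denominators $\prod_j(1-q^j)$ are coprime to $[p]_q$, so that the congruences take place in the localization of $\mathbb{Z}[q]$ at $[p]_q$ and the final statement is a genuine polynomial identity, using $\frac{p^2-1}{12}\in\mathbb{Z}$ for primes $p\ge5$.
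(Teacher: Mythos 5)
Your argument is correct --- I checked the exact pairing identity $a_j+a_{p-j}=-1+\frac{q^jQ}{(1-q^j)((1-q^j)+Q)}$, the root-of-unity evaluations $\sum_j\frac{1}{1-q^j}\equiv\frac{p-1}{2}$ and $\sum_j\frac{1}{(1-q^j)^2}\equiv-\frac{(p-1)(p-5)}{12}$ modulo $[p]_q$, and the final matching of the $Q$- and $Q^2$-coefficients (both sides give $\frac{(p-1)(5p-1)}{12}$ for the latter) --- but it is a genuinely different route from the paper's. The paper expands $\binom{2p}{p}_q=\sum_{i=0}^{p}\binom{p}{i}_q^2q^{i^2}$ by $q$-Chu--Vandermonde; the terms $i=0,p$ produce $[2]_{q^{p^2}}$ exactly, and each middle term visibly contains the factor $[p]_q^2$ coming from $\binom{p}{i}_q^2$, so the whole lemma collapses to a single first-order computation, namely $\sum_{i=1}^{p-1}\frac{q^i}{(1-q^i)^2}\equiv-\frac{p^2-1}{12}\ (\mod\ [p]_q)$, which is quoted from Shi and Pan. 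Your route --- the product representation $(1+q^p)\prod_j[p+j]_q/[j]_q$ expanded in powers of $Q=q^p-1$ --- is essentially the Clark--Straub method that this paper is written to avoid, and it forces you to know the $q$-harmonic sum $\sigma_1$ to second order modulo $[p]_q^2$, which you handle with the $j\leftrightarrow p-j$ pairing, plus $\sigma_2$ via Newton's identity. What each buys: the paper's decomposition gets the two powers of $[p]_q$ for free and so never needs a second-order congruence; yours is self-contained (you derive the Shi--Pan evaluation from the logarithmic derivative of $[p]_x$ at $x=1$ instead of citing it) and isolates the $q$-Wolstenholme phenomenon explicitly. Two small points to tidy in a write-up: since only $1-2\sigma_1$ enters the coefficient of $Q$, you can work with $2\sigma_1$ throughout and never divide by $2$ modulo $[p]_q^2$ (harmless here because $\mathbb{Z}[q]/([p]_q^2)$ is $\mathbb{Z}$-torsion-free, but easier to sidestep); and the passage from a congruence in the localization back to a polynomial congruence in $\mathbb{Z}[q]$ deserves the one-line Gauss-lemma remark you allude to, since $[p]_q$ is monic and primitive.
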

\begin{proof}
By the $q$-Chu-Vandermonde-formula,
$$
\binom{2p}{p}_q=\sum_{i=0}^{p}\binom{p}{i}_q^2q^{i^2}=1+q^{p^2}+\sum_{i=1}^{p-1}\binom{p}{i}_q^2q^{i^2}=[2]_{q^{p^2}}+\sum_{i=1}^{p-1}\binom{p}{i}_q^2q^{i^2}.
$$
Thus we need only show that $\sum_{i=1}^{p-1}\binom{p}{i}_q^2q^{i^2}$ is congruence $(\mod~{[p]_q}^3)$ to $-\frac{p^2-1}{12}(q^p-1)^2$. Since
$$
\binom{p}{i}_q^2q^{i^2}=(\frac{{[p]!}_q}{{[i]!}_q{[p-i]!}_q})^2q^{i^2}=[p]_q^2(\frac{[p-1]!_q}{[i]!_q[p-i]!_q})^2q^{i^2},
$$
we need only show that $\sum_{i=1}^{p-1}(\frac{{[p-1]!}_q}{{[i]!}_q{[p-i]}_q})^2q^{i^2}$ is congruence $(\mod~{[p]_q})$ to $-\frac{p^2-1}{12}(1-q)^2$. Noting that $q^p\equiv 1 ~(\mod [p]_q)$, we have
$$\begin{array}{lll}
&(\frac{{[p-1]!}_q}{{[i]!}_q{[p-i]}_q})^2q^{i^2}\\
&=(\frac{(1-q^{p-1})(1-q^{p-2})\cdots (1-q^{p-i+1})}{(1-q)(1-q^2)\cdots (1-q^{i})})^2q^{i^2}(1-q)^2\\
&=(\frac{(q-q^{p})(q^2-q^p)\cdots (q^{i-1}-q^p)}{(1-q)(1-q^2)\cdots (1-q^{i})})^2q^{i}(1-q)^2\\
\end{array}
$$$$
\begin{array}{lll}
&\equiv(\frac{(q-1)(q^2-1)\cdots (q^{i-1}-1)}{(1-q)(1-q^2)\cdots (1-q^{i})})^2q^{i}(1-q)^2 ~(\mod~{[p]_q}) & \\
&=\frac{q^i(1-q)^2}{(1-q^i)^2},
\end{array}
$$
and it implies that $\sum_{i=1}^{p-1}(\frac{{[p-1]!}_q}{{[i]!}_q{[p-i]}_q})^2q^{i^2}$ is congruence $(\mod~{[p]_q})$ to $\sum_{i=1}^{p-1}\frac{q^i(1-q)^2}{(1-q^i)^2}$. Hence we are done if $\sum_{i=1}^{p-1}\frac{q^i}{(1-q^i)^2}$ is congruence $(\mod~{[p]_q})$ to $-\frac{p^2-1}{12}$. In fact, this is a deformation of Lemma 2 in \cite{Shi_Pan} due to Shi and Pan. The proof is complete.
\end{proof}

As a second step of an inductive proof of Theorem \ref{th3}, the following lemma is needed.

\begin{lemma}\label{le4}
For any prime $p\geq 5$,
\begin{align}\label{al5}
\binom{kp}{p}_q\equiv \binom{k}{1}_{q^{p^2}}-\binom{k}{2}\frac{p^2-1}{12}(q^p-1)^2 ~(\mod~[p]_q^3).
\end{align}
\end{lemma}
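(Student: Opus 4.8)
The plan is to induct on $k$, with the cases $k=0$ and $k=1$ immediate since $\binom{0}{p}_q=0$ and $\binom{p}{p}_q=1$ already match the right-hand side of (\ref{al5}). For the inductive step (for $k\geq 2$, with the statement assumed for $k-1$) I would apply the $q$-Chu-Vandermonde formula exactly as in the proof of Lemma \ref{le3}, with parameters $(k-1)p,\,p,\,p$, and use the symmetry $\binom{p}{p-j}_q=\binom{p}{j}_q$ to get
$$\binom{kp}{p}_q=\sum_{j=0}^{p}\binom{(k-1)p}{j}_q\binom{p}{j}_q q^{j^2}=1+q^{p^2}\binom{(k-1)p}{p}_q+\sum_{j=1}^{p-1}\binom{(k-1)p}{j}_q\binom{p}{j}_q q^{j^2},$$
where the boundary terms $j=0$ and $j=p$ have been separated. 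The term $q^{p^2}\binom{(k-1)p}{p}_q$ will be handled by the induction hypothesis, and the entire difficulty is concentrated in the middle sum.

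The key auxiliary fact I would establish is that, for $1\le j\le p-1$,
$$\binom{(k-1)p}{j}_q\equiv [k-1]_{q^p}\binom{p}{j}_q~(\mod~[p]_q^2).$$
To prove it, I would use $[(k-1)p]_q=[k-1]_{q^p}[p]_q$ together with the factorwise congruence $[(k-1)p-i]_q-[p-i]_q=q^{p-i}[k-2]_{q^p}[p]_q\equiv 0~(\mod~[p]_q)$. Writing $\binom{(k-1)p}{j}_q[j]!_q$ and $[k-1]_{q^p}\binom{p}{j}_q[j]!_q$ as products of such factors (times the common $[k-1]_{q^p}[p]_q$) and comparing shows their difference is divisible by $[p]_q^2$; since $[j]!_q$ is coprime to the irreducible polynomial $[p]_q$ (hence a unit modulo $[p]_q^2$) for $j\le p-1$, the congruence follows. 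Because $\binom{p}{j}_q\equiv 0~(\mod~[p]_q)$, multiplying the displayed congruence by $\binom{p}{j}_q q^{j^2}$ absorbs one further factor of $[p]_q$, so modulo $[p]_q^3$ the middle sum reduces to $[k-1]_{q^p}\sum_{j=1}^{p-1}\binom{p}{j}_q^2 q^{j^2}$. By Lemma \ref{le3} the inner sum is $\equiv-\frac{p^2-1}{12}(q^p-1)^2~(\mod~[p]_q^3)$, and since $(q^p-1)^2$ already carries $[p]_q^2$ I may reduce $[k-1]_{q^p}\equiv k-1~(\mod~[p]_q)$, concluding that the middle sum is congruent to $-(k-1)\frac{p^2-1}{12}(q^p-1)^2$ modulo $[p]_q^3$.

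Finally I would assemble the pieces. Feeding the induction hypothesis into $q^{p^2}\binom{(k-1)p}{p}_q$, using the identity $q^{p^2}\binom{k-1}{1}_{q^{p^2}}=\binom{k}{1}_{q^{p^2}}-1$ and the fact that $q^{p^2}\equiv 1~(\mod~[p]_q)$ simplifies the error term (whose factor $(q^p-1)^2$ supplies $[p]_q^2$), I obtain
$$\binom{kp}{p}_q\equiv\binom{k}{1}_{q^{p^2}}-\Big(\binom{k-1}{2}+(k-1)\Big)\frac{p^2-1}{12}(q^p-1)^2~(\mod~[p]_q^3),$$
and the elementary identity $\binom{k-1}{2}+(k-1)=\binom{k}{2}$ closes the induction. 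I expect the main obstacle to be the auxiliary congruence modulo $[p]_q^2$ together with the careful bookkeeping of precisely how many factors of $[p]_q$ each term contributes; once the middle sum is correctly collapsed onto a multiple of the sum evaluated in Lemma \ref{le3}, the remaining manipulations are routine.
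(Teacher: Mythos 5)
Your proof is correct, and it takes a genuinely different route from the paper. The paper's inductive step applies the $q$-Chu--Vandermonde formula \emph{twice}, expanding $\binom{(k-1)p}{p-i}_q$ again into a sum involving $\binom{(k-2)p}{\cdot}_q$, and then splits the result into four pieces $L_1,\dots,L_4$; each of $L_2$ and $L_3$ is reassembled (via Chu--Vandermonde run in reverse, after adjusting the $q$-exponents modulo $[p]_q$) into a difference of $q$-binomial coefficients to which the induction hypothesis or Lemma~\ref{le3} is applied, and $L_4$ is shown to vanish modulo $[p]_q^3$. You instead apply Chu--Vandermonde once and collapse the entire cross sum through the auxiliary congruence $\binom{(k-1)p}{j}_q\equiv[k-1]_{q^p}\binom{p}{j}_q\ (\mathrm{mod}\ [p]_q^2)$ for $1\le j\le p-1$, which does not appear in the paper; it is in effect a $[p]_q^2$-refinement of the $q$-Lucas theorem for these entries, and your derivation of it (factor out $[k-1]_{q^p}[p]_q$, compare remaining factors termwise using $[(k-1)p-i]_q-[p-i]_q=q^{p-i}[k-2]_{q^p}[p]_q$, then cancel the unit $[j]!_q$) is sound, since $[j]!_q$ is coprime to the irreducible monic polynomial $[p]_q$. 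After that, one factor of $[p]_q$ from $\binom{p}{j}_q$ lifts the congruence to modulus $[p]_q^3$, Lemma~\ref{le3} evaluates $\sum_j\binom{p}{j}_q^2q^{j^2}$, and the bookkeeping with $[k-1]_{q^p}\equiv k-1$ and $\binom{k-1}{2}+(k-1)=\binom{k}{2}$ closes the induction exactly as you say. What your approach buys is economy: a single Chu--Vandermonde expansion, a uniform inductive step valid for all $k\ge2$ (the paper treats $k=2$ separately and assumes $k\ge3$), no double sum $L_4$ to dispose of, and no delicate reverse-Vandermonde reassembly; what it costs is the extra lemma modulo $[p]_q^2$, which is however no harder than the rational-function manipulations the paper already performs in proving Lemma~\ref{le3}.
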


\begin{proof}
For a given integer $k$, if $k=1$, the proposition is trivially true. If $k=2$, it can be deduced from Lemma \ref{le3}. Now we assume that $k\geq 3$. By the $q$-Chu-Vandermonde formula,
$$\begin{array}{lll}
L&=\binom{kp}{p}_q \\
&=\sum_{i=0}^{p}\binom{(k-1)p}{p-i}_q\binom{p}{i}_qq^{i((k-2)p+i)}\\
&= \binom{(k-1)p}{p}_q +q^{(k-1)p^2}+\sum_{i=1}^{p-1}\binom{(k-1)p}{p-i}_q\binom{p}{i}_qq^{i((k-2)p+i)}\\
&=\binom{(k-1)p}{p}_q +q^{(k-1)p^2} +\sum_{i=1}^{p-1}\binom{p}{i}_qq^{i((k-2)p+i)}\sum_{j=0}^{p-i}\binom{(k-2)p}{p-i-j}_q\binom{p}{j}_qq^{j((k-3)p+i+j)}\\
&=\binom{(k-1)p}{p}_q +q^{(k-1)p^2}+\sum_{i=1}^{p-1}\binom{p}{i}_q\binom{(k-2)p}{p-i}_qq^{i((k-2)p+i)}+\sum_{i=1}^{p-1}\binom{p}{i}_q\binom{p}{p-i}_qq^{p^2(k-2)+i^2}\\
&+\sum_{i=1}^{p-1}\sum_{j=1}^{p-i-1}\binom{p}{i}_q\binom{(k-2)p}{p-i-j}_q\binom{p}{j}_qq^{i((k-2)p+i)+j((k-3)p+i+j)}.
\end{array}
$$
Now let $s(i,j)=i((k-2)p+i)+j((k-3)p+i+j)$ and let
$$\begin{array}{lll}
& L_1=\binom{(k-1)p}{p}_q +q^{(k-1)p^2},\\
& L_2=\sum_{i=1}^{p-1}\binom{p}{i}_q\binom{(k-2)p}{p-i}_qq^{i((k-2)p+i)},\\
& L_3=\sum_{i=1}^{p-1}\binom{p}{i}_q\binom{p}{p-i}_qq^{p^2(k-2)+i^2},\\
& L_4=\sum_{i=1}^{p-1}\sum_{j=1}^{p-i-1}\binom{p}{i}_q\binom{(k-2)p}{p-i-j}_q\binom{p}{j}_qq^{s(i,j)}.
\end{array}
$$
By the induction hypothesis,
$$\begin{array}{lll}
L_1&\equiv\binom{k-1}{1}_{q^{p^2}}+q^{(k-1)p^2}-\binom{k-1}{2}\frac{p^2-1}{12}(q^p-1)^2 ~(\mod~[p]_q^3)\\
&=\binom{k}{1}_{q^{p^2}}-\binom{k-1}{2}\frac{p^2-1}{12}(q^p-1)^2.\\
\end{array}
$$
On the other hand, by the $q$-Lucas-Theorem, for $1\leq i\leq p-1$, $\binom{p}{i}_q\equiv\binom{(k-2)p}{p-i}_q\equiv 0 ~(\mod~[p]_q)$, and we also have $q^{i((k-2)p+i)}\equiv q^{i((k-3)p+i)}~(\mod~[p]_q)$. By the induction hypothesis,
$$\begin{array}{lll}
L_2&\equiv\sum_{i=1}^{p-1}\binom{p}{i}_q\binom{(k-2)p}{p-i}_qq^{i((k-3)p+i)}~(\mod~[p]_q^3)\\
&=\binom{(k-1)p}{p}_q-\binom{(k-2)p}{p}_q-q^{(k-2)p^2}\\
&\equiv (\binom{k-1}{1}_{q^{p^2}}-\binom{k-2}{1}_{q^{p^2}}-q^{(k-2)p^2})-(\binom{k-1}{2}-\binom{k-2}{2})\frac{p^2-1}{12}(q^p-1)^2 ~(\mod~[p]_q^3)\\
&=-(k-2)\frac{p^2-1}{12}(q^p-1)^2.
\end{array}
$$
Similarly, we have
$$\begin{array}{lll}
L_3&=\sum_{i=1}^{p-1}\binom{p}{i}_q\binom{p}{p-i}_qq^{p^2(k-2)+i^2}\\
&\equiv\sum_{i=1}^{p-1}\binom{p}{i}_q\binom{p}{p-i}_qq^{i^2}~(\mod~[p]_q^3)\\
&=\binom{2p}{p}_q-1-q^{p^2}\\
&\equiv [2]_{q^{p^2}}-1-q^{p^2}-\frac{p^2-1}{12}(q^p-1)^2 ~(\mod~[p]_q^3)\\
&=-\frac{p^2-1}{12}(q^p-1)^2
\end{array}
$$
and
$$
L_4\equiv 0~(\mod~[p]_q^3).
$$
Thus, we have
$$\begin{array}{lll}
L&=L_1+L_2+L_3+L_4\\
&\equiv\binom{k}{1}_{q^{p^2}}-\binom{k-1}{2}\frac{p^2-1}{12}(q^p-1)^2-(k-2)\frac{p^2-1}{12}(q^p-1)^2-\frac{p^2-1}{12}(q^p-1)^2 ~(\mod~[p]_q^3)\\
&=\binom{k}{1}_{q^{p^2}}-\binom{k}{2}\frac{p^2-1}{12}(q^p-1)^2.
\end{array}
$$
The proof is complete.
\end{proof}

\begin{remark}
Motivated by Wilson's theorem which states that $(p-1)!\equiv -1 ~(\mod~p)$ if $p$ is a prime, Wolstenholme \cite{Wolstenholme} proved that for primes $p\geq 5$,
\begin{align}\label{al6}
\binom{2p-1}{p-1}\equiv 1~(\mod~p^3).
\end{align}
Later, Glaisher \cite{Glaisher} improved Wolstenholme's result (\ref{al6}) by proving that if $p$ is a prime $\geq 5$, then
\begin{align}\label{al7}
\binom{mp+p-1}{p-1}\equiv 1 ~(\mod~p^3).
\end{align}
Note that (\ref{al4}) and (\ref{al5}) can be considered as $q$-analogues of Wolstenholme's congruence (\ref{al6}) and Glaisher's congruence (\ref{al7}), respectively.
\end{remark}

\noindent{}
{\bf {Proof of Theorem 3.}}
We use induction on $s$ and $k$ to give a proof. For a given integer $k$, if $s=0$, it is trivially true. If $s=1$, it can deduced from Lemma \ref{le4}. If $k\leq s$, the result is also right. Now we assume that $k>s\geq 2$ and for a fixed $s$, we induct on $k$. By the $q$-Chu-Vandermonde formula,
$$\begin{array}{lll}
L&=\binom{kp}{sp}_q \\
   &=\sum_{i=0}^{p}\binom{(k-1)p}{sp-i}_q\binom{p}{i}_qq^{i((k-s-1)p+i)}\\
   &= \binom{(k-1)p}{sp}_q +\binom{(k-1)p}{(s-1)p}_qq^{(k-s)p^2}+\sum_{i=1}^{p-1}\binom{(k-1)p}{sp-i}_q\binom{p}{i}_qq^{i((k-s-1)p+i)}\\
   &=\binom{(k-1)p}{sp}_q +\binom{(k-1)p}{(s-1)p}_qq^{(k-s)p^2} +\sum_{i=1}^{p-1}\binom{p}{i}_qq^{i((k-s-1)p+i)}\sum_{j=0}^{p}\binom{(k-2)p}{sp-i-j}_q\binom{p}{j}_qq^{j((k-2-s)p+i+j)}\\
   &=\binom{(k-1)p}{sp}_q +\binom{(k-1)p}{(s-1)p}_qq^{(k-s)p^2}+\sum_{i=1}^{p-1}\binom{p}{i}_q\binom{(k-2)p}{sp-i}_qq^{i((k-s-1)p+i)}+\sum_{i=1}^{p-1}\binom{p}{i}_q\binom{(k-2)p}{(s-1)p-i}_q\cdot\\
   &q^{(p+i)((k-1-s)p+i)}+\sum_{i=1}^{p-1}\sum_{j=1}^{p-1}\binom{p}{i}_q\binom{(k-2)p}{sp-i-j}_q\binom{p}{j}_qq^{i((k-1-s)p+i)+j((k-2-s)p+i+j)}.
\end{array}
$$
Now let $s(i,j)=i((k-1-s)p+i)+j((k-2-s)p+i+j)$ and let
$$\begin{array}{lll}
& L_1=\binom{(k-1)p}{sp}_q+\binom{(k-1)p}{(s-1)p}_qq^{(k-s)p^2},\\
& L_2=\sum_{i=1}^{p-1}\binom{p}{i}_q\binom{(k-2)p}{sp-i}_qq^{i((k-s-1)p+i)},\\
& L_3=\sum_{i=1}^{p-1}\binom{p}{i}_q\binom{(k-2)p}{(s-1)p-i}_qq^{(p+i)((k-1-s)p+i)},\\
& L_4=\sum_{i=1}^{p-1}\sum_{j=1}^{p-1}\binom{p}{i}_q\binom{(k-2)p}{sp-i-j}_q\binom{p}{j}_qq^{s(i,j)}.
\end{array}
$$
By the induction hypothesis,
$$\begin{array}{lll}
L_1&\equiv\binom{k-1}{s}_{q^{p^2}}+\binom{k-1}{s-1}_{q^{p^2}}q^{(k-s)p^2}-\{\binom{k-1}{s+1}\binom{s+1}{2}+\binom{k-1}{s}\binom{s}{2}q^{(k-s)p^2}\}\frac{(p^2-1)(1-q)^2}{12}[p]_q^2 ~~(\mod~[p]_q^3)\\
&=\binom{k}{s}_{q^{p^2}}-\{\binom{k-1}{s+1}\binom{s+1}{2}+\binom{k-1}{s}\binom{s}{2}\}\frac{(p^2-1)(1-q)^2}{12}[p]_q^2.\\
\end{array}
$$
On the other hand, for $1\leq i\leq p-1$, $\binom{p}{i}_q\equiv\binom{(k-2)p}{sp-i}_q\equiv 0 ~(\mod~[p]_q)$ and $q^{i((k-s-1)p+i)}\equiv q^{i((k-s-2)p+i)}~(\mod~[p]_q)$.
By the induction hypothesis,
$$\begin{array}{ll}
L_2&\equiv\sum_{i=1}^{p-1}\binom{p}{i}_q\binom{(k-2)p}{sp-i}_qq^{i((k-s-2)p+i)}~(\mod~[p]_q^3)\\
&=\{\sum_{i=0}^{p}\binom{p}{i}_q\binom{(k-2)p}{sp-i}_qq^{i((k-s-2)p+i)}\}-\binom{(k-2)p}{sp}_q-\binom{(k-2)p}{(s-1)p}_qq^{p^2(k-s-1)}\\
&=\binom{(k-1)p}{sp}_q-\binom{(k-2)p}{sp}_q-\binom{(k-2)p}{(s-1)p}_qq^{p^2(k-s-1)}\\
&\equiv\{\binom{k-1}{s}_{q^{p^2}}-\binom{k-2}{s}_{q^{p^2}}-\binom{k-2}{s-1}_{q^{p^2}}q^{p^2(k-s-1)}\}-\{\binom{k-1}{s+1}\binom{s+1}{2}-\binom{k-2}{s+1}\binom{s+1}{2}-\binom{k-2}{s}\binom{s}{2}\}\cdot\\
&\frac{(p^2-1)(1-q)^2}{12}[p]_q^2 ~(\mod~[p]_q^3)\\
&=-\{\binom{k-1}{s+1}\binom{s+1}{2}-\binom{k-2}{s+1}\binom{s+1}{2}-\binom{k-2}{s}\binom{s}{2}\}\cdot\frac{(p^2-1)(1-q)^2}{12}[p]_q^2\\
&=-\{\binom{k-2}{s}s\}\cdot\frac{(p^2-1)(1-q)^2}{12}[p]_q^2.
\end{array}
$$
Similarly, we have
$$\begin{array}{lll}
L_3&\equiv\sum_{i=1}^{p-1}\binom{p}{i}_q\binom{(k-2)p}{(s-1)p-i}_qq^{i((k-1-s)p+i)}~(\mod~[p]_q^3)\\
&=\{\sum_{i=0}^{p}\binom{p}{i}_q\binom{(k-2)p}{(s-1)p-i}_qq^{i((k-1-s)p+i)}\}-\binom{(k-2)p}{(s-1)p}_q-\binom{(k-2)p}{(s-2)p}_qq^{p^2(k-s)}\\
&=\binom{(k-1)p}{(s-1)p}_q-\binom{(k-2)p}{(s-1)p}_q-\binom{(k-2)p}{(s-2)p}_qq^{p^2(k-s)}\\
&\equiv\{\binom{k-1}{s-1}_{q^{p^2}}-\binom{k-2}{s-1}_{q^{p^2}}-\binom{k-2}{s-2}_{q^{p^2}}q^{p^2(k-s)}\}-\{\binom{k-1}{s}\binom{s}{2}-\binom{k-2}{s}\binom{s}{2}-\binom{k-2}{s-1}\binom{s-1}{2}\}\cdot\\
&\frac{(p^2-1)(1-q)^2}{12}[p]_q^2 ~(\mod~[p]_q^3)\\
&=-\{\binom{k-1}{s}\binom{s}{2}-\binom{k-2}{s}\binom{s}{2}-\binom{k-2}{s-1}\binom{s-1}{2}\}\cdot\frac{(p^2-1)(1-q)^2}{12}[p]_q^2 ~(\mod~[p]_q^3)\\
&=-\{\binom{k-2}{s-1}(s-1)\}\cdot\frac{(p^2-1)(1-q)^2}{12}[p]_q^2
\end{array}
$$
and
$$\begin{array}{lll}
L_4&=\sum_{i=1}^{p-1}\sum_{j=1}^{p-1}\binom{p}{i}_q\binom{(k-2)p}{sp-i-j}_q\binom{p}{j}_qq^{i((k-1-s)p+i)+j((k-2-s)p+i+j)}\\
&\equiv\sum_{i+j=p,i\geq 1,j\geq1}\binom{p}{i}_q\binom{p}{j}_q\binom{(k-2)p}{(s-1)p}_{q}q^{i(p-j)}~(\mod~[p]_q^3)\\
&=\{\sum_{i+j=p}\binom{p}{i}_q\binom{p}{j}_q\binom{(k-2)p}{(s-1)p}_{q}q^{i(p-j)}\}-\binom{(k-2)p}{(s-1)p}_{q}(1+q^{p^2})\\
&=\binom{2p}{p}_{p}\cdot\binom{(k-2)p}{(s-1)p}_{q}-\binom{(k-2)p}{(s-1)p}_{q}(1+q^{p^2})\\
&\equiv\{[2]_{q^{p^2}}-1-q^{p^2}\}\cdot\binom{(k-2)p}{(s-1)p}_{q}-\binom{k-2}{s-1}_{q^{p^2}}\cdot\frac{(p^2-1)(1-q)^2}{12}[p]_q^2~(\mod~[p]_q^3)\\
&\equiv-\binom{k-2}{s-1}\cdot\frac{(p^2-1)(1-q)^2}{12}[p]_q^2~(\mod~[p]_q^3).
\end{array}
$$
Note that
$$
\binom{k}{s+1}\binom{s+1}{2}=\binom{k-1}{s+1}\binom{s+1}{2}+\binom{k-1}{s}\binom{s}{2}+\binom{k-2}{s}s+\binom{k-2}{s-1}(s-1)+\binom{k-2}{s-1}.
$$
Thus we have
$$\begin{array}{lll}
L&=L_1+L_2+L_3+L_4\\
&\equiv\binom{k}{s}_{q^{p^2}}-\binom{k}{s+1}\binom{s+1}{2}\cdot\frac{(p^2-1)(1-q)^2}{12}[p]_q^2 ~(\mod~ [p]_q^3).
\end{array}
$$
The proof is complete. {\hfill$\Box$}

\section{Another $q$-analogue of Ljunggren's congruence}
Glaisher's congruence (\ref{al7}) can be written as
\begin{align}\label{al8}
(mp+1)(mp+2)\ldots (mp+p-1)\equiv (p-1)! ~(\mod~p^3).
\end{align}
In 1999, Andrews \cite{Andrews_5} gave a $q$-analogue (\ref{al9}) of Glaisher's congruence (\ref{al8}):
If $p$ is an odd prime and $m\geq 1$, then
\begin{align}\label{al9}
\frac{{(q^{mp+1};q)_{p-1}}-q^{mp(p-1)/2}(q;q)_{p-1}}{(1-q^{(m+1)p})(1-q^{mp})}\equiv\frac{(p^2-1)p}{24}~(\mod~[p]_q).
\end{align}
Recently, with the help of Andrews' $q$-analogue (\ref{al9}), Pan \cite[Lemma 3.1]{Pan_2} got a general $q$-analogue of Ljunggren's congruence (\ref{al2}). The following $q$-analogue can be deduced from his result.

\begin{thm}\label{th4}
For any prime $p\geq 5$ and nonnegative integers $k$, $s$,
\begin{align}\label{al3}
\binom{kp}{sp}_q\equiv q^{(k-s)s\binom{p}{2}}\cdot(\binom{k}{s}_{q^{p}}+k\binom{k}{s+1}\binom{s+1}{2}\frac{p^2-1}{12}(q^p-1)^2)~(\mod~[p]_q^3).
\end{align}
\end{thm}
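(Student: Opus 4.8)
The plan is to deduce Theorem \ref{th4} from the already-proven Theorem \ref{th3}, rather than from Pan's lemma, by checking that the two right-hand sides are congruent modulo $[p]_q^3$. Since $q^p\equiv 1\pmod{[p]_q}$, the prefactor satisfies $q^{(k-s)s\binom{p}{2}}\equiv 1\pmod{[p]_q}$, and because the $(q^p-1)^2$ correction term of Theorem \ref{th4} is already divisible by $[p]_q^2$, that prefactor may be dropped from it modulo $[p]_q^3$. Comparing the two statements, it therefore suffices to establish the purely algebraic congruence
$$
\binom{k}{s}_{q^{p^2}}-q^{(k-s)s\binom{p}{2}}\binom{k}{s}_{q^p}\equiv (k+1)\binom{k}{s+1}\binom{s+1}{2}\frac{p^2-1}{12}(q^p-1)^2\pmod{[p]_q^3};
$$
substituting this into the formula of Theorem \ref{th3} and regrouping then reproduces the congruence of Theorem \ref{th4}.

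The mechanism is a second-order expansion. I would set $u=q^p-1=-(1-q)[p]_q$, so that $u^3\equiv 0\pmod{[p]_q^3}$ and everything need only be tracked to order $u^2$. Writing $Q=1+w$ and using $[m]_Q=m+\binom{m}{2}w+\binom{m}{3}w^2+\cdots$, one obtains the Taylor-type expansion
$$
\binom{k}{s}_Q=\binom{k}{s}\left(1+\frac{s(k-s)}{2}w+\Big(\frac{s(k-s)(k-5)}{24}+\frac{s^2(k-s)^2}{8}\Big)w^2\right)+O(w^3).
$$
Applying this with $Q=q^p$ (so $w=u$) and with $Q=q^{p^2}=(q^p)^p=(1+u)^p$ (so $w=pu+\binom{p}{2}u^2+O(u^3)$), one checks that the constant and first-order terms in $u$ coincide on both sides; indeed $q^{(k-s)s\binom{p}{2}}=(1+u)^{(k-s)s(p-1)/2}$ is precisely what cancels the first-order discrepancy, so the difference in the displayed congruence is of order $u^2$.

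The crux is matching the $u^2$ coefficients. After the $\frac{s^2(k-s)^2}{8}$ contributions cancel, the second-order discrepancy collapses to the clean multiple $\binom{k}{s}\frac{s(k-s)(k+1)}{24}(p^2-1)u^2$; using $\binom{k}{s+1}\binom{s+1}{2}=\binom{k}{s}\frac{(k-s)s}{2}$, this equals exactly $(k+1)\binom{k}{s+1}\binom{s+1}{2}\frac{p^2-1}{12}(q^p-1)^2$, which is the displayed congruence. (A small case such as $k=2$, $s=1$, $p=5$, where both sides reduce to $6(q^5-1)^2$, gives a reassuring check.)

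The main obstacle is the bookkeeping in this second-order expansion: correctly combining the cross terms of $(1+u)^{(k-s)s(p-1)/2}$ with $\binom{k}{s}_{q^p}$, and confirming that the resulting $u^2$ coefficient is genuinely proportional to $p^2-1$ rather than merely a polynomial in $p$. As an alternative closer to the paper's own method, one could instead prove Theorem \ref{th4} by the same double induction on $s$ and $k$ used for Theorem \ref{th3}, now taking Andrews' congruence (\ref{al9}) as the Glaisher-type base case and carrying the prefactor $q^{(k-s)s\binom{p}{2}}$ through the $q$-Chu--Vandermonde splitting; this avoids the expansion but forces one to rerun the entire $L_1,\dots,L_4$ analysis with the extra powers of $q$.
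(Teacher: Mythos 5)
Your proposal is correct, and it takes a genuinely different route from the paper. The paper does not actually prove Theorem \ref{th4}: it simply asserts that the statement can be deduced from Pan's general lemma (\cite[Lemma 3.1]{Pan_2}), which in turn rests on Andrews' congruence (\ref{al9}). You instead derive Theorem \ref{th4} from Theorem \ref{th3}, which the paper does prove, by showing the two right-hand sides agree modulo $[p]_q^3$. I checked the key steps: dropping the prefactor from the $(q^p-1)^2$ term is legitimate since that term already carries a factor $[p]_q^2$ and $q^{(k-s)s\binom{p}{2}}\equiv 1 \pmod{[p]_q}$; your expansion $\binom{k}{s}_Q=\binom{k}{s}\bigl(1+\frac{s(k-s)}{2}w+(\frac{s(k-s)(k-5)}{24}+\frac{s^2(k-s)^2}{8})w^2\bigr)+O(w^3)$ is correct (it follows from $\log[m]_Q=\log m+\frac{m-1}{2}w+\frac{(m-1)(m-5)}{24}w^2+O(w^3)$ and the identities $\binom{k}{2}-\binom{s}{2}-\binom{k-s}{2}=s(k-s)$, $f(k)-f(s)-f(k-s)=6s(k-s)(k-5)$ for $f(n)=n(n-1)(2n-13)$); the first-order terms cancel exactly because $\frac{s(k-s)(p-1)}{2}+\frac{s(k-s)}{2}=\frac{s(k-s)p}{2}$; the $\frac{s^2(k-s)^2}{8}$ contributions cancel since $p^2-(p-1)^2-2(p-1)-1=0$; and the surviving $u^2$ coefficient is $\binom{k}{s}\frac{s(k-s)(k+1)(p^2-1)}{24}=(k+1)\binom{k}{s+1}\binom{s+1}{2}\frac{p^2-1}{12}$, as you claim. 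One small point worth making explicit: since both sides are polynomials in $Q=q^p$ with integer coefficients and $(Q-1)^3$ is monic, vanishing to order $3$ at $Q=1$ gives divisibility by $(q^p-1)^3=(q-1)^3[p]_q^3$ in $\mathbb{Z}[q]$, so the $O(u^3)$ remainder really is $\equiv 0 \pmod{[p]_q^3}$. What your approach buys is self-containment (no appeal to Pan's lemma or Andrews' congruence) plus an explicit proof that Straub's and Pan's forms of the $q$-Ljunggren congruence are equivalent modulo $[p]_q^3$; what the paper's citation buys is brevity and a connection to the more general result of Pan.
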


\end{document}